\documentclass{siamltex}

\usepackage{epsfig,graphicx,psfrag,mathrsfs}
\usepackage{amssymb}
\usepackage{amsmath}
\usepackage[algoruled]{algorithm2e}
\usepackage{color}

\usepackage{colortbl}
\usepackage{xcolor}

\usepackage{multirow}

\newtheorem{example}{{\it Example}}

\textwidth 6.0 in
\marginparwidth 0pt \oddsidemargin  0pt \evensidemargin  0pt
\marginparsep 0pt \hoffset .3in

\title{Integer Low Rank Approximation of Integer matrices\\}

\author{
Bo Dong \thanks{ School of Mathematical Sciences, Dalian
University of Technology, Dalian, Liaoning 116024, China
(dongbo@dlut.edu.cn). } \and Matthew M. Lin
\thanks{Corresponding author.
Department of Mathematics, National Chung Cheng University,
Chia-Yi 621, Taiwan. (mhlin@ccu.edu.tw). } \and
 Haesun Park\thanks{
 School of Computational Science and Engineering, Georgia Institute of Technology, Atlanta, GA 30332-0765, USA
 (hpark@cc.gatech.edu).
 }
}

\begin{document}

\maketitle

\begin{abstract}


Integer data sets frequently appear in many applications in sciences
and technology. To analyze these, integer low rank approximation has
received much attention due to its capacity of representing the
results in integers preserving the meaning of the original data
sets. To our knowledge, none of previously proposed techniques
developed for real numbers can be successfully applied, since
integers are discrete in nature. In this work, we start with a
{thorough} review of algorithms for solving integer least
squares problems, {and} then develop a block coordinate
descent method based on the integer least squares estimation to
obtain the integer low rank approximation of integer matrices. The
numerical application on association analysis and numerical
experiments on random integer matrices are presented. Our computed
results seem to suggest that our method can find a more accurate
solution than other existing methods for continuous data sets.

\begin{AMS}
41A29, 49M25, 49M27, 65F30, 90C10, 90C11, 90C90
\end{AMS}

\begin{keywords}
data mining, matrix factorization, integer least squares problem,
clustering, association rule.

\end{keywords}

\end{abstract}

\section{Introduction}
The study of integer approximation has long been a subject of
interest in many areas such as networks of communication, data organization, environmental chemistry, lattice design, and finance~\cite{Agrell2002, Hassibi1996,Morgan01,Lin2011}. In this work, we consider integer low rank approximation of integer matrices (ILA).
Let $\mathbb{Z}^{m \times n}$ denote the set of $m\times n$ integer matrices. Then, the ILA problem can be stated as follows:
\begin{quote}
{\bf (ILA)} Given an integer matrix $A \in \mathbb{Z}^{m \times
n}$ and a positive integer
 %
 $k < \min\{m,n\}$, find $U \in \mathbb{Z}^{m\times k}$ and $V \in
\mathbb{Z}^{k \times n}$
so that the residual function
\begin{equation}~\label{eq:ila}
f(U,V):=\|A-UV\|_F^2
\end{equation}
 is minimized.
\end{quote}
Concretely, {the ILA is to represent an original integer
data set in a space spanned by integer basis vectors using integer
representations minimizing the residual function in~\eqref{eq:ila}.}
One example that characterizes this problem is the
so-called~\emph{market basket transactions}. See
Table~\ref{trandata} for example, where it shows orders from five
customers $C_1,\ldots,C_5$.
\begin{table}[h!!!]
  \caption{An integer representation of the transaction example}  \label{trandata}
\begin{center}
  \begin{tabular}{c|cccccc}
&Bread & Milk & Diapers & Eggs & Chips & Beer\\
\hline
$C_1$&2&1& 3&0&2&5\\
$C_2$&2&1&1&0&2&4\\
$C_3$&0&0&4&2&0&2\\
$C_4$&4&2&2&0&4&8\\
$C_5$&0&0&2&1&0&1\\
  \end{tabular}
  \end{center}
\end{table}
This should be a huge data set with many customers and shopping
items in practice. Similar to the discussion given in~\cite{KG03,
KGR06}, we would like to discover an association rule such as
``$\mbox{\{2 diapers, 1 egg\}}\Rightarrow \mbox{\{1 beer\}}$", which
means that if a customer has shopped for two diapers and one egg,
he/she has a higher possibility of shopping for one beer. The
``possibility" is determined by a weight associated with each
transaction. To keep the number of items to be integer, the weight
is defined by nonnegative integers corresponding to the
representative transaction. For example, we can approximate the
original integer data set, which is define in Table~\ref{trandata},
by two representative transactions ${[2,1,1,0,2,4]}$  and
${[0,0,2,1,0,1]}$ with the weight determined by a $5\times 2$
integer matrix, that
 is,
 \begin{equation}\label{tranLRF}
A = \left[
  \begin{array}{cccccc}
2&1& 3&0&2&5\\
2&1&1&0&2&4\\
0&0&4&2&0&2\\
4&2&2&0&4&8\\
0&0&2&1&0&1\\
  \end{array}\right] \approx
  \left[
  \begin{array}{cc}
1&1\\
1&0\\
0&2\\
2&0\\
0&1\\
  \end{array}
  \right]
\left[
  \begin{array}{cccccc}
2&1&1&0&2&4\\
0&0&2&1&0&1\\
  \end{array}
  \right].
 \end{equation}
This is indeed a rank-two
approximation which provides the possible shopping behaviors of
the original five customers.
%

Note that the study of the ILA is to analyze original integer
(i.e.,discrete) data in term of integer representatives so that
the underlying information can be conveyed directly. Due to the
discrete characteristic, conventional techniques such as
SVD~\cite{Golub2013} and the nonnegative matrix factorization
(NMF)~\cite{Kawamoto00,Donoho03,
Chu2008,Kim08,Kim2011,Kim2014} are inappropriate and unable to solve this
problem. Also, we can regard this problem as a
generalization of the so-called {Boolean} matrix
approximation, where entries of $A$, $U$ and $V$ are limited
to $\mathbb{Z}_2:=\{0,1\}$, i.e., binaries, and columns of the
matrix $U$ are orthogonal. In~\cite{KG03, KGR06}, Koyut\"{u}rk
et al.~\cite{KG03, KGR06} proposed a nonorthogonal binary
decomposition approach to recursively approximate the given
data by the rank-one approximation. This idea is then
generalized in~\cite{Lin2011} to handle integer data sets, called
the binary-integer low rank approximation (BILA), and can be defined
as follows:

\begin{quote}
{\bf (BILA)} Given an integer matrix $A \in \mathbb{Z}^{m \times
n}$ and a positive integer $k < \min\{m,n\}$,
find $U\in \mathbb{Z}_2^{m\times k}$ satisfying $U^\top U = I_k$, where $I_k$ is a $k\times k$ identity matrix,
and $V \in \mathbb{Z}^{k \times n}$
such that the residual function
\begin{equation*}
f(U,V):=\|A-UV\|_F^2
\end{equation*}
 is minimized.
\end{quote}

To demonstrate the BILA, we use the scenario of selling laptops. We know that each laptop includes $n$
slots which can be equipped with only one type of specified
options, for example, 8GB memory or 1TB hard drive. Here, the uniqueness is determined by the orthogonal columns of $U$. Let $t_i$ represent the number of different types of options for each
slot. This implies that we have $\prod_{i=1}^{n}t_{i}$ possible
cases. To efficiently fulfill different customer orders and
increase the sales, the manager needs to decide a few basic
models, e.g., $k$ basic ones. One plausible approach is to
learn the information from past experience. This amounts to
first labeling every possible option in a slot by an integer
while each integer is corresponding to a particular attribute.
For example, record $m$ past customers' orders  into an $m\times
n$ integer data matrix.  Second, divide the orders
into $k$ different integer clusters. These $k$ clusters then
lead to the $k$ different models that should be provided; see
also~\cite{Morgan01} for a similar discussion in
telecommunications industry.

Besides, the BILA behaves like the $k$-means problem.
That is because all rows of $A$ are now divided into $k$ disjoint clusters
due to the fact that
$U = [u_{ij}]\in\mathbb{Z}_2^{m\times k}$
and  $U^\top U = I_k$, and each centroid is the rounding of the means of the rows from each disjoint cluster~\cite{Lin2011}.
It is known that the $k$-means problem
is NP-hard~\cite{Aloise2009,Dasgupta2009}. This might indicate the
possibility of solving the BILA is NP-hard. Moreover, it is true
that the BILA is a special case of ILA. We thus
conjecture that the ILA is an NP-hard problem.
%

To handle this ``seemingly" NP-hard problem, we thus organize
this paper as follows. In Section~\ref{sec:adi}, we investigate
the ILA in terms of the block coordinate descent (BCD) method and discuss some characteristics related to
orthogonal constraints. In Section~\ref{sec:ils}, we review the
methods for solving integer least squares problem, apply them
to solve the ILA, and {present a convergence theorem
correspondingly}. Three examples are demonstrated in
Section~\ref{sec:test} to show the capacity and efficiency of
our ILA approaches and the concluding remarks are given in
Section~\ref{sec:conclusion}.

\section{Block Coordinate
Descent}~\label{sec:adi}
The framework of the BCD 
method includes the following
procedures:
\begin{enumerate}
\item[Step 1.] Provide an initial value $V\in\mathbb{Z}^{k\times n}$.
\item[Step 2.] Iteratively, solve the following problems until a stopping criterion is satisfied:
\begin{subequations}\label{constraint}
\begin{eqnarray}
\min_{U\in\mathbb{Z}^{m\times k}} \|A - UV\|_F^2,
 \label{constraint1}
 \\
\min_{V\in\mathbb{Z}^{k\times n}} \|A - UV\|_F^2.
 \label{constraint2}
\end{eqnarray}
\end{subequations}
\end{enumerate}

Alternatively, we can initialize $U$ first and
iterate~\eqref{constraint1} and ~\eqref{constraint2} in the
reverse order. Note that in Step 2, each subproblem is required
to find the optimal matrices $U$ or $V$ so that $\|A-UV\|_F$ is
minimized. Observe further that
\begin{equation}\label{separate}
\|A - UV\|_F^2 = \sum_{i}\|A(i,:) - U(i,:)V\|_2^2 = \sum_j\|A(:,j) - U V(:,j)\|_2^2.
\end{equation}
It follows that~\eqref{separate} is minimized, if for each $i$,
\begin{eqnarray*}
\|A(i,:) - U(i,:)V\|_2
\end{eqnarray*}
is minimized and for each $j$,
\begin{eqnarray*}
\|A(:,j) - U V(:,j)\|_2
\end{eqnarray*}
is minimized. Let the symbol ``round(X)" denote a function which rounds each element of $X$ to the nearest integer.
Since the 2-norm is invariant under orthogonal transformations,
one immediate
result can be stated as follows.

\begin{theorem}\label{thm:lintu}
Suppose  $\mathbf{a} \in \mathbb{Z}^{1\times n}$ and  $V\in \mathbb{Z}^{k\times n}$ with
$n\geq k$ and $VV^\top = I_k$, where $I_k$ is a $k\times k$ identity matrix. Let
\begin{equation}\label{opt_u}
\hat{\mathbf{u}}= \rm{round} (\mathbf{a}V^\top ) \in\mathbb{Z}^{1\times k}.
\end{equation}
 Then the following is true:
\begin{equation*}
\|\mathbf{a} - \hat{\mathbf{u}} V\|_2  \leq
\min_{\mathbf{u}\in\mathbb{Z}^{1\times k}} \|\mathbf{a} -
\mathbf{u}V\|_2 .
\end{equation*}

\end{theorem}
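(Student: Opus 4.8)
The plan is to exploit the orthonormality of the rows of $V$ to decouple the integer minimization into $k$ independent one-dimensional problems, each solved by rounding. First I would complete $V$ to a full orthogonal matrix. Since $n \geq k$ and $VV^\top = I_k$, the $k$ rows of $V$ form an orthonormal set in $\mathbb{R}^n$, so there exists $W \in \mathbb{R}^{(n-k)\times n}$ whose rows extend them to an orthonormal basis; setting $Q := \left[\begin{smallmatrix} V \\ W \end{smallmatrix}\right]$ gives an $n\times n$ orthogonal matrix satisfying $VW^\top = 0$.

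Next, using the invariance of the $2$-norm under the orthogonal transformation $Q^\top$ applied on the right, I would compute, for an arbitrary $\mathbf{u}\in\mathbb{Z}^{1\times k}$,
\begin{equation*}
\|\mathbf{a} - \mathbf{u}V\|_2^2 = \|(\mathbf{a} - \mathbf{u}V)Q^\top\|_2^2 = \|\mathbf{a}V^\top - \mathbf{u}\|_2^2 + \|\mathbf{a}W^\top\|_2^2,
\end{equation*}
where the splitting relies on $VQ^\top = [I_k,\ 0]$, which follows from $VV^\top = I_k$ and $VW^\top = 0$. The crucial observation is that the term $\|\mathbf{a}W^\top\|_2^2$ is independent of $\mathbf{u}$, so minimizing $\|\mathbf{a}-\mathbf{u}V\|_2$ over integer $\mathbf{u}$ is equivalent to minimizing $\|\mathbf{a}V^\top - \mathbf{u}\|_2$.

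Finally, since $\|\mathbf{a}V^\top - \mathbf{u}\|_2^2 = \sum_{j=1}^k \big((\mathbf{a}V^\top)_j - u_j\big)^2$ separates over coordinates, each integer $u_j$ may be chosen independently, and the nearest integer to the real number $(\mathbf{a}V^\top)_j$ minimizes the $j$-th summand. That optimal choice is precisely $\hat{\mathbf{u}} = \mathrm{round}(\mathbf{a}V^\top)$, so $\hat{\mathbf{u}}$ attains the minimum over $\mathbb{Z}^{1\times k}$; because $\hat{\mathbf{u}}$ is itself an integer vector, the stated inequality (in fact an equality) is immediate.

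I expect the only point requiring care to be the orthogonal completion of $V$ and the verification of $VQ^\top = [I_k,\ 0]$; once the squared error is split into a $\mathbf{u}$-dependent part and an additive constant, the coordinatewise rounding argument is routine. In truth no genuine obstacle arises, since the hypotheses $n\geq k$ and $VV^\top = I_k$ are exactly what guarantee both the completion and the clean separation that makes rounding optimal.
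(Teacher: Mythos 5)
Your proof is correct and follows exactly the route the paper intends: the paper justifies the theorem only by the remark that the 2-norm is invariant under orthogonal transformations, and your argument (completing $V$ to an orthogonal $Q$, splitting off the $\mathbf{u}$-independent term $\|\mathbf{a}W^\top\|_2^2$, and rounding coordinatewise) is precisely the fleshed-out version of that one-line justification. One minor extra observation: since $\mathbf{a}$ and $V$ are integer matrices, $\mathbf{a}V^\top$ is already an integer vector, so the rounding is vacuous here and the stated inequality is an equality, as you note.
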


With an obvious change of notation, an analogous
statement would be true for $\mathbf{a} \in \mathbb{Z}^{m\times 1}$, and $U\in \mathbb{Z}^{m\times k}$ having
$m\geq k$ and $U^\top U = I_k$. Besides, Theorem~\ref{thm:lintu} provides a way to obtain the optimal matrix $U$ in the following way, whose proof is by direct observation.


%
%

%
%
\begin{theorem}\label{thm:intu}
Suppose $A \in \mathbb{Z}^{m\times n}$ and  $V\in \mathbb{Z}^{k\times n}$ with
$n\geq k$ and $VV^\top = I_k$. Let
\begin{equation*}
\widehat{U} = \rm{round} (AV^\top ) \in\mathbb{Z}^{m\times k}.
\end{equation*}
 Then the following is true:
\begin{equation*}
\|A - \widehat{U}V\|_F \leq \min_{U\in\mathbb{Z}^{m\times k}}\|A - UV\|_F.
\end{equation*}

\end{theorem}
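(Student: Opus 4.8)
The plan is to reduce the matrix statement to the row-vector statement of Theorem~\ref{thm:lintu} and then sum over rows. First I would exploit the separability of the Frobenius norm recorded in~\eqref{separate}, namely $\|A-UV\|_F^2 = \sum_{i}\|A(i,:) - U(i,:)V\|_2^2$. The crucial point is that this sum decouples completely across the $m$ rows: the $i$-th summand depends only on the $i$-th row of $U$, and the rows of an integer matrix $U\in\mathbb{Z}^{m\times k}$ may be chosen independently, since the sole constraint is entrywise integrality. Hence minimizing $\|A-UV\|_F$ over $U\in\mathbb{Z}^{m\times k}$ is equivalent to minimizing each term $\|A(i,:)-\mathbf{u}V\|_2$ separately over $\mathbf{u}\in\mathbb{Z}^{1\times k}$.

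Next I would apply Theorem~\ref{thm:lintu} to each row. Setting $\mathbf{a}=A(i,:)$, the hypotheses $n\geq k$ and $VV^\top=I_k$ carry over unchanged, so Theorem~\ref{thm:lintu} guarantees that $\widehat{\mathbf{u}}_i := \mathrm{round}(A(i,:)V^\top)$ satisfies $\|A(i,:)-\widehat{\mathbf{u}}_i V\|_2 \leq \min_{\mathbf{u}\in\mathbb{Z}^{1\times k}} \|A(i,:)-\mathbf{u}V\|_2$. The remaining bookkeeping is to identify $\widehat{\mathbf{u}}_i$ with the $i$-th row of $\widehat{U}=\mathrm{round}(AV^\top)$: because both the matrix product $AV^\top$ and the rounding operation act row-by-row, we have $\widehat{U}(i,:) = \mathrm{round}(A(i,:)V^\top) = \widehat{\mathbf{u}}_i$.

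Finally I would assemble the bound. For an arbitrary $U\in\mathbb{Z}^{m\times k}$, each row satisfies $\|A(i,:)-U(i,:)V\|_2 \geq \|A(i,:)-\widehat{\mathbf{u}}_i V\|_2$ by the previous step; squaring and summing over $i$ gives $\|A-UV\|_F^2 \geq \sum_{i}\|A(i,:)-\widehat{\mathbf{u}}_i V\|_2^2 = \|A-\widehat{U}V\|_F^2$, and taking the minimum over $U$ yields the claim. I do not anticipate a genuine obstacle here—this is essentially the ``direct observation'' promised in the text. The only point requiring care is the decoupling across rows: one must confirm that no coupling constraint links the rows of $U$, so that the individual row-wise optima can be attained simultaneously by the single integer matrix $\widehat{U}$. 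With that verified, the result is immediate from Theorem~\ref{thm:lintu}.
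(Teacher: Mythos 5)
Your proof is correct and follows exactly the route the paper intends: the text introduces Theorem~\ref{thm:intu} as a consequence of Theorem~\ref{thm:lintu} ``whose proof is by direct observation,'' meaning precisely the row-wise decoupling via~\eqref{separate} that you spell out. Nothing further is needed.
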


Similarly, the optimal matrix $V$ can be obtained immediately, provided that $U$ is given and $U^\top U = I_k$.
But, could the optimal matrices $U$ and $V$ be obtained
simply by rounding real optimal solutions $( {A}V^\top)(V V^\top)^{-1}$ and $(U^\top U)^{-1}(U^\top {A})$, respectively? Definitely, the
answer is no due to the discrete property embedded in
the integer matrix.

\begin{example}\label{ex:rounding}
Consider
$\min_{\mathbf{v}\in\mathbb{Z}^{k\times 1}}
 \|\mathbf{a} - U\mathbf{v}\|_2 $,
where $U = \left[\begin{array}{cc}8 & 1 \\9 &
2\end{array}\right]$ and $\mathbf{a} = \left[\begin{array}{c}16
\\17\end{array}\right] $. The optimal solution
$\mathbf{v}_{\mbox{opt}}\in\mathbb{R}^{2\times 1}$ can be
obtained by computing
\begin{equation*}
\mathbf{v}_{\mbox{opt}} = (U^\top U)^{-1}(U^\top \mathbf{a})\approx \left[\begin{array}{c}2.1429 \\-1.1429\end{array}\right].
\end{equation*}
Considering integer vectors $\mathbf{v}_1 =
\left[\begin{array}{c}2 \\-1\end{array}\right]$, $\mathbf{v}_2
= \left[\begin{array}{c}2 \\-2\end{array}\right]$,
$\mathbf{v}_3 = \left[\begin{array}{c}3 \\-1\end{array}\right]$
and $\mathbf{v}_4 = \left[\begin{array}{c}3
\\-2\end{array}\right]$ which are around $\mathbf{v}_{\mbox{opt}}$, we
see that $\|\mathbf{a} - U\mathbf{v}_i\|_2  = \sqrt{2},
\sqrt{13}, \sqrt{113}, 6\sqrt{2}$ for $i = 1,\ldots, 4$,
respectively. However, if we take $\mathbf{v} =
\left[\begin{array}{c}2 \\ 0 \end{array}\right]$, we have
$\|\mathbf{a} - U\mathbf{v}\|_2 = 1 < \|\mathbf{a} -
U\mathbf{v}_i\|_2$ for $i = 1,\ldots, 4$. See also
Figure~\ref{roundint} for a demonstration through the geometric
viewpoint.
 %
%
\begin{figure}[h]
\begin{center}
\epsfig{figure=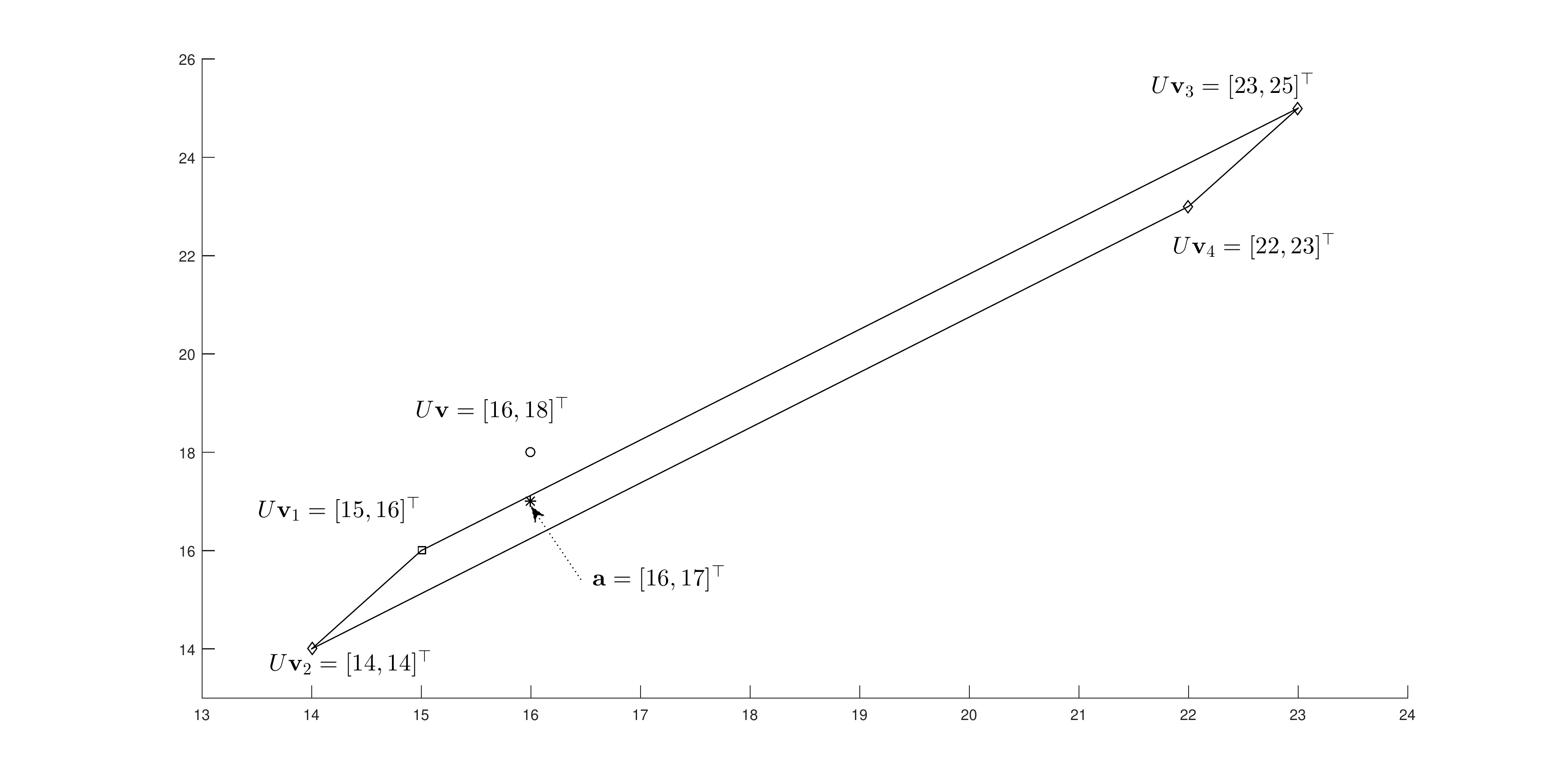, width=13.5cm}
\end{center}
\caption{A counterexample for the rounding approach}
\label{roundint}
\end{figure}

\end{example}

It follows that the major mechanism of the BCD method lies in computing the global minimum in each iteration.
To this end, we discuss the
integer least squares problem in the next section and apply it
to solve the ILA column-by-column/row-by-row so that a sequence
of descent residuals can be expected.

\section{Integer Least Squares Problems}\label{sec:ils}
Given a vector $y\in\mathbb{R}^{m}$
and a matrix $H \in\mathbb{R}^{m\times n}$ with full column
rank, the integer least squares (ILS) problem is defined as
\begin{equation}\label{eq:ILS}
\min_{\mathbf{x}\in\mathbb{Z}^{n}} \|\mathbf{y} - H\mathbf{x} \|_2^2.
 \end{equation}
Solving the ILS problem is known to be
NP-hard~\cite{van1981}. In this section, we first review a
common approach for solving ILS problems. This approach is
referred to be the enumeration approach and usually relies on
two major processes, \emph{reduction} and \emph{search}.
Second, we consider the ILS problem with box constraints
(ILSb), which can be defined as:
\begin{equation} \label{eq:ILSb}
\min_{\mathbf{x}\in\mathcal{B}} \|\mathbf{y} - H\mathbf{x} \|_2^2,
 \end{equation}
where $\mathcal{B}
=\mathcal{B}_1\times\cdots\times\mathcal{B}_n $ with
$\mathcal{B}_i = \{x_i\in \mathbb{Z}:
 \ell_i\leq x_i\leq u_i\}$.
We then enhance this idea of solving the ILSb to solve the ILA with box
constraints, where entries are subject to a bounded region.
Third, we will discuss some convergence properties for the ILA.

\subsection{ILS}
Typical methods for solving ILS problems in the literature have two stages: reduction (or preprocessing) and
search.

\emph{To do the reduction}, the idea is based on the well-known
process, Lenstra-Lenstra-{Lov\'{a}sz}~\cite{Lenstra1982} (LLL)
reduction. This process is to transform the ILS problem defined in~\eqref{eq:ILS}
into a reduced form
\begin{equation}\label{eq:reduced}
\min_{\mathbf{z}\in \mathbb{Z}^n} \|\hat{\mathbf{y}} - R \mathbf{z} \|_2^2,
 \end{equation}
in terms of a specific QRZ factorization~\cite{Chang2009} such that
\begin{eqnarray}\label{eq:qz}
Q^\top H Z &=&
\left[\begin{array}{c}R \\0\end{array}\right], \,\hat{\mathbf{y}} = Q_1^\top \mathbf{y},\,  \mathbf{z} = Z^{-1}\mathbf{x},
\end{eqnarray}
where $Q = \left[\begin{array}{cc} Q_1 & Q_2\end{array} \right]
\in\mathbb{R}^{m\times m}$ is orthogonal,
$Z\in\mathbb{Z}^{n\times n}$ is unimodular (i.e., $\det(Z) =
\pm 1$), and $R = [r_{i,j}]\in\mathbb{R}^{n\times n}$ is an
upper triangular matrix satisfying the following conditions:
\begin{subequations}\label{Rcondition}
 \begin{eqnarray}
 |r_{i-1,j}| &\leq& \dfrac{|r_{i-1,i-1}|}{2},\quad 2\leq i< j\leq n,\label{Rcondition1}\\
 \delta r^2_{i-1,i-1} &\leq& r^2_{i-1,i} + r_{i,i}^2,\quad 2\leq i \leq n,\label{Rcondition2}
 \end{eqnarray}
\end{subequations}
where $\delta$ is a parameter satisfying {$\frac{1}{4}< \delta \leq 1$}. In
this work, we choose $\delta =1$ as {suggested}
in~\cite{Chang2009}. Then it can be easily obtained
from~\eqref{Rcondition} that the diagonal entries of $R$ have
the following properties:
\begin{equation*}
 |r_{i-1,i-1}| \leq \frac{2}{\sqrt{3}}| r_{i,i}|, \quad 2\leq i \leq n.
\end{equation*}
Once having these properties, the matrix $R$ can enhance the
efficiency of a typical search algorithm
for~\eqref{eq:reduced}; see~\cite{Agrell2002, Foschini1999} and
Algorithm~\ref{lllalg} for more details.

\setlength{\algomargin}{1.5 ex}
\restylealgo{algoruled}
\SetKwComment{Comment}{}{}
\begin{algorithm}[t] \linesnumbered
\caption{(LLL reduction)~\cite{Chang2009} \hfill $[R,
Z, \hat{\mathbf{y}}] = \textsc{LLL}(H,\mathbf{y})$}
\label{lllalg} \KwIn{A matrix $H\in\mathbb{R}^{m\times n}$ and a
 vector $y\in\mathbb{R}^m$.}
\KwOut{An upper triangular matrix $R\in \mathbb{R}^{n\times n}$, an
unimodular matrix $Z\in\mathbb{Z}^{n\times n}$, and a vector
$\hat{\mathbf{y}}\in \mathbb{R}^{n}$ for the computation of~\eqref{eq:reduced} and~\eqref{eq:qz}.}
\Begin{
compute the QR decomposition of $H$, i.e., \\
\Comment{ $H= \left[\begin{array}{cc} Q_1 &
Q_2\end{array}\right] \left[\begin{array}{c}R
\\0\end{array}\right]$\;} set $Z = I_n$,
$\hat{\mathbf{y}}=Q_1^\top \mathbf{y}$ and $k=2$\; \While
{$k\leq n$} {
    \Comment{/*Size Reduction on $r_{1:k-1,k}$}
 \For{$ i = k-1,\ldots,1$}
{$R  \leftarrow  RZ_{i,k}$,\,
$Z  \leftarrow ZZ_{i,k}$\;
}
        \eIf{$r_{k-1,k-1}^2 > (r_{k-1,k}^2 + r_{k,k}^2)$}
{
\Comment{/* Permutation and Triangularization}
$R  \leftarrow  G_{k-1,k} R P_{k-1,k}$,\,
$Z  \leftarrow  Z P_{k-1,k}$,\,
$\hat{\mathbf{y}}  \leftarrow  G_{k-1,k} \hat{\mathbf{y}}$\;
\If {$k>2$} {$k  \leftarrow   k-1$\;}
}
{
$k \leftarrow k+1$\;
}
}
}
\end{algorithm}

Note that the entire QRZ factorization includes a QR
factorization and two types of transformations, integer Gauss
transformations (IGTs) and {permutations}, to
implicitly update $R$ right after the QR factorization of $H$
so that it satisfies~\eqref{Rcondition}. To make this work more
self-contained, we briefly introduce the major results here;
see~\cite{Chang2009} for more details.

First, the IGTs are done by an unimodular matrix $Z_{i,j}$ which is given by
\begin{equation*}
Z_{i,j} = I_n-\zeta_{ij} e_ie_j^\top,
\end{equation*}
where $i\neq j$ and
$\zeta_{ij} = \rm{round}(\dfrac{r_{i,j}}{r_{i,i}})$.
We then multiply $R$ with $Z_{i,j}$ ($i<j$) from the right and
obtain an updated matrix
\begin{equation*}
\widehat{R} \equiv RZ_{i,j}= R-\zeta_{i,j} R e_ie_j^\top.
\end{equation*}
It can be seen that entries of $\widehat{R}$ are equal to
entries of $R$, except that $\hat{r}_{k,j} = r_{k,j}
-\zeta_{i,j} r_{k,i}$ for $k= 1,\ldots, i$, and
satisfies~\eqref{Rcondition1}.

Second, if~\eqref{Rcondition2} is not satisfied for some $i$,
we then swap columns $i-1$ and $i$ of $R$ by a permutation
matrix, denoted by $P_{i-1,i}$. But the resulting matrix $R$ is
no longer upper triangular and is required to be trangularized.
In~\cite{Lenstra1982}, the Gram-Schmidt process (GP) is
applied to bring back the structure. That is, find an
orthogonal matrix $G_{i-1,i}$ so that the resulting matrix
\begin{equation*}
\widehat{\widehat{R}} = G_{i-1,i} \widehat{R} P_{i-1,i}
\end{equation*}
is an upper triangular and satisfies~\eqref{Rcondition1}.

It should be noted that right after these transformations, two
characteristics are worthy of our attention~\cite{Ling2007,
Xie2011}. First, it can be seen that
\begin{equation*}
\min_{\hat{\mathbf{z}}\in \mathbb{Z}^n} \|\hat{\mathbf{y}} - \widehat{R} \hat{\mathbf{z}} \|_2^2
= \min_{\mathbf{z}\in \mathbb{Z}^n} \|\hat{\mathbf{y}} - R \mathbf{z} \|_2^2,
\end{equation*}
since $Z_{i,j}$ is unimodular, that is, the application of the
IGT will not affect the optimal value of~\eqref{eq:reduced}.
Second, it follows from a direct computation that
\begin{subequations}
  \begin{eqnarray*}
 \hat{\hat{r}}^2_{i-1,i-1} &=&
\hat{r}_{i-1,i}^2
 +r_{i,i}^2
 < {r}_{i-1,i}^2+r_{i,i}^2,\\
  |\hat{\hat{r}}_{i-1,i-1}  \hat{\hat{r}}_{i,i}|
  &=& |\det(\widehat{\widehat{R}} )|
  =|\det(R)| =   |r_{i-1,i-1}  r_{i,i}|.
 \end{eqnarray*}
\end{subequations}
This implies that though the IGT will not reduce the optimal
value of~\eqref{eq:reduced}, it will reduce the absolute value
of the original $(i-1,i-1)$ entry and enlarge the absolute
value of the $(i,i)$ entry and hence make a typical search
algorithm more efficient. However, the calculation of the IGTs
would be time-consuming. For example, in
Algorithm~\ref{lllalg}, we need to recompute the IGTs in
previous step, once~\eqref{Rcondition2} is not satisfied. To
prevent from this calculation and enhance the efficiency of the
algorithm, the IGTs are computed only when a permutation is
required. This result can be seen in Algorithm~\ref{plllalg},
which is called the partial LLL reduction
algorithm~\cite{Xie2011}. Specifically, the trangularization
and the initial QR decomposition in~\cite{Lenstra1982} for the
original LLL algorithm are {obtained} through the GP to the permuted matrix $R$ and the initial matrix
$H$. To enhance the stability of Algorithm~\ref{lllalg}, the trangularization
and the initial QR decomposition computed in~\cite{Xie2011} are through Givens rotations
and Householder transformations, respectively.

\setlength{\algomargin}{1.5 ex}
\restylealgo{algoruled}
\SetKwComment{Comment}{}{}
\begin{algorithm}[t] \linesnumbered
\caption{(PLLL reduction)~\cite{Xie2011} \hfill $[R, Z, \hat{\mathbf{y}}] = \textsc{PLLL}(H,\mathbf{y})$}
\label{plllalg}
\KwIn{A matrix $H\in\mathbb{R}^{m\times n}$ and a
 vector $\mathbf{y}\in\mathbb{R}^m$.}
\KwOut{An upper triangular matrix $R\in \mathbb{R}^{n\times n}$, an unimodular matrix $Z\in\mathbb{Z}^{n\times n}$, and a vector $\hat{\mathbf{y}}\in
\mathbb{R}^{n}$  for the computation of~\eqref{eq:reduced} and~\eqref{eq:qz}.}
\Begin{
compute the QR decomposition of $H$ with minimum column pivoting, i.e., \\
\Comment{ $H=
\left[\begin{array}{cc} Q_1 & Q_2\end{array}\right]
\left[\begin{array}{c}R \\0\end{array}\right]P$\;}
set $Z = P$, $\hat{\mathbf{y}}= Q_1^\top  \mathbf{y}$  and $k=2$\;
\While {$k\leq n$}{
$\zeta= \rm{round}( \dfrac{r_{k-1,k}}{r_{k-1,k-1}})$,\,
$\alpha = (r_{k-1,k} -  \zeta r_{k-1,k-1})$\;
        \eIf{$r_{k-1,k-1}^2 > (\alpha^2 + r_{k,k}^2)$}
   {
   \If{ $\zeta \neq 0$}
   {
\Comment{/*Size Reduction on $r_{k-1,k}$}
{$R  \leftarrow  RZ_{k-1,k}$,\,
$Z  \leftarrow ZZ_{k-1,k}$\;
}
%
    \Comment{/*Size Reduction on $r_{1:k-2,k}$}
 \For{$ i = k-1,\ldots,1$}
{$R  \leftarrow  RZ_{i,k}$,\,
$Z  \leftarrow ZZ_{i,k}$\;
}
}
%
%
\Comment{/* Permutation and Triangularization}
$R  \leftarrow  R P_{k-1,k}$,\,
$Z  \leftarrow  Z P_{k-1,k}$\;
$R  \leftarrow  G_{k-1,k}R$,\,
$\hat{\mathbf{y}}  \leftarrow  G_{k-1,k} \hat{\mathbf{y}}$\;
\If {$k>2$} {$k  \leftarrow   k-1$\;}
}
{
$k \leftarrow k+1$\;
}
}
}

\end{algorithm}

\emph{To do the search}, let us first consider
the following inequality
\begin{equation}\label{eq:hyper}
\|\hat{\mathbf{y}} - R \mathbf{z}\|_2^2 < \beta,
\end{equation}
where $\beta > 0$, $\hat{\mathbf{y}}\in \mathbb{R}^{n}$, $\mathbf{z} = [z_i]\in \mathbb{Z}^n$, and $R
= [r_{i,j}]
\in\mathbb{R}^{n\times n}$.
Note that~\eqref{eq:hyper} gives rise to a hyperellipsoid,
\begin{equation*}
\|\hat{\mathbf{y}} - R \mathbf{z}\|_2^2 =  \beta
\end{equation*}
in terms of variable $\mathbf{z}$. Let $\mathbf{z}$ be a
solution of~\eqref{eq:hyper} and define
\begin{equation*}
c_n = \dfrac{\hat{y}_n}{r_{n,n}},\quad
c_k = \dfrac{\hat{y}_k-\sum_{j=k+1}^n
r_{k,j}z_j}{r_{k,k}},
\end{equation*}
for $k = n-1,\ldots, 1$.
Then~\eqref{eq:hyper} is equivalent to
\begin{equation}\label{eq:r0}
\sum_{i=1}^n r_{i,i}^2 (z_i-c_i)^2 < \beta,
\end{equation}
which implies that
\begin{subequations} \label{eq:r}
\begin{eqnarray}
r_{n,n}^2 (z_n-c_n)^2 &<&\beta, \label{eq:rn}\\
r_{k,k}^2(z_k - c_k)^2 &<& \beta - \sum_{i=k+1}^n r_{i,i}^2(z_i-c_i)^2, \label{eq:rk}
\end{eqnarray}
\end{subequations}
for $k = n-1,\ldots, 1$. Based on the above inequalities, we
can apply the idea of the Schnorr-Euchner (SE) enumerating
strategy to propose the following search process, denoted by
$[\mathbf{z}] =
\textsc{search}(R,\hat{\mathbf{y}})$~\cite{Damen2003}:

\vskip 0.1 in
\begin{enumerate}
\item[Step 1.]
Choose $z_{n} = \rm{round}(c_n)$ and let $k=n-1$.
\item[Step 2.] For $1 < k < n$, consider the following two cases

\begin{itemize}
\item[Step 2a.]  Choose $z_{k} = \rm{round}( c_k )$. Once~\eqref{eq:rk} is satisfied,
move forward to the next level, i.e. search for $z_{k-1}$.

\item[Step 2b.] Otherwise, move back to the previous level, but choose $z_{k+1}$
in Step 2a as the next nearest integer to $c_{k+1}$ with largest magnitude.

\end{itemize}

\item[Step 3.]  Once $k =1$,  consider the following two cases.

\begin{itemize}
\item[Step 3a.]  Choose $z_{1} = \rm{round}(c_1)$, and update
the parameter $\beta$ by defining $\beta = \|\hat{\mathbf{y}} - R \mathbf{z}\|_2^2$.
Move back to Step 2 with $k=2$.

\item[Step 3b.] Otherwise, move back to the previous level, but choose $z_{2}$ in Step 2a as the next nearest integer to $c_{2}$.

\end{itemize}

\item[Step 4.] Once reach the last level, i.e., $k=n$, and~\eqref{eq:rn} is not
satisfied for the current $\beta$, output the latest found integer point $\mathbf{z}$
as the optimal solution of~\eqref{eq:reduced}.
\end{enumerate}
\vskip 0.1 in

Note that to make Step 4 meaningful, the initial bound should
be carefully provided. The simplest way is to assign
infinity as the initial bound. Combing these reduction and
search processes, we then have an approach to
solve~\eqref{eq:reduced}, or equivalently, to
solve~\eqref{eq:ILS}.

\subsection{ILSb}
For the ILSb problem~\eqref{eq:ILSb}, the LLL (PLLL) reduction
cannot be directly applied to obtain the optimal solution. This
is because the unimodular matrix $Z_{i,j}$ obtained in the QRZ
factorization might complicate the box constraints, if
$Z_{i,j}$ is not a permutation matrix. An alternative way to do
the reduction and to enhance the efficiency of search process
is required. It should be noted that till now, the reduction
processes proposed in the literature more or less strive for
diagonal entries arranged in a nondescreasing order, i.e.,
\begin{equation*}
|r_{1,1}|\leq |r_{2,2}|\leq \cdots\leq |r_{n,n}|.
\end{equation*}
This purpose is to reduce the search range of $z_k$, for
$k=n,n-1,\ldots,1$, once the right hand side of~\eqref{eq:r} is
provided.

For the details of the development of the reduction processes, see~\cite{Fincke1985,Damen2003, Wubben2011} and
references therein. It should be noted that the three methods
given in~\cite{Fincke1985,Damen2003, Wubben2011} share a common
weakness, that is, only the information of the matrix $H$ is
used to do the reduction. In~\cite{Chang2008}, Chang and Han
proposed a column reordering approach which uses all available
information, the matrix H and box constraints. We denote this approach as the CH algorithm, which has been shown to be more efficient that the existing algorithms
in~\cite{Fincke1985,Damen2003, Wubben2011}.

The idea of the CH algorithm is to reduce the right hand side
of~\eqref{eq:r}
but not to
arrange diagonal entries in a nondecreasing order directly.
This is because even if $|r_{k,k}|$ is very
large, the value of $r_{k,k}^2 (z_k-c_k)^2$ may be very small, and hence
the search range is large. For this reason, they proposed to choose $z_k$
as the second nearest integer to $c_k$, i.e., $|z_k-c_k| >
0.5$. Importantly, once $|r_{k,k}(z_k-c_k)|$ is very large,
$|r_{k,k}|$ is usually very large. We summarize the entire
process as
follows~\cite{Chang2008}:

\vskip 0.1 in
\begin{enumerate}
\item[Step 1.] Compute the QR decomposition of $H$, \begin{equation*}
H=
\left[\begin{array}{cc} Q_1 & Q_2\end{array}\right]
\left[\begin{array}{c}R \\0\end{array}\right],
\end{equation*}
and define $\hat{\mathbf{y}} = Q_1^\top \mathbf{y}$,
$\bar{\mathbf{y}} = \hat{\mathbf{y}}$, and $k=n$.

\item[Step 2.]
    If $k < n$, compute
    $
\bar{\mathbf{y}}(1:k) \leftarrow \bar{\mathbf{y}}(1:k) - R(1:k,k+1)\hat{z}_{k+1}
    $.
Let 
$\widetilde{R} = R(1:k,1:k)$ and $\tilde{\mathbf{y}} =\bar{\mathbf{y}}(1:k)$.
    For $i= 1,\ldots,k$, consider the following two-step process.
\begin{itemize}
\item First, swap columns $i$ and $k$ of $\widetilde{R}$
with a permutation matrix $P_{i,k}$ and return $\widetilde{R}$
to upper triangular with Givens rotations $G_i$, and also use
$G_i$ to update $\tilde{\mathbf{y}}$, that is, we have
\begin{subequations}\label{eq:bar}
\begin{eqnarray}
\overline{R} &\leftarrow & G_i \widetilde{R} P_{i,k},\\
\bar{\mathbf{y}} &\leftarrow & G_i \tilde{\mathbf{y}}.
\end{eqnarray}
\end{subequations}

\item Second,
compute
\begin{equation}\label{eq:zck}
z^{(0)}_{i} = \rm{round}(c_k)_{\mathcal{B}_i}, \, {z}_{i} = \rm{round}(c_k)_{\mathcal{B}_i\backslash z^{(0)}_{i}},
\,\mbox{dist}_i = |{\bar{r}}_{k,k}{z}_{i} - \bar{{y}}_k|,
\end{equation}
where $c_k = \bar{{y}}_k/{\bar{r}}_{k,k}$, $\overline{R} = [{\bar{r}}_{i,j}]$, and $\bar{\mathbf{y}} = [\bar{{y}}_i]$.
\end{itemize}

\item[Step 3.] Let $\mbox{dist}_j = \max_{1\leq i\leq k}
\mbox{dist}_i$. Interchange columns $j$ and $k$ of $P$,
$\mathcal{B}_j$ and $\mathcal{B}_k$, and update $R$ and $\hat{\mathbf{y}}$
by defining
\begin{eqnarray*}
R(1:k,1:k) &\leftarrow& G_j R(1:k,1:k) P_{j,k}, \\
R(1:k,k+1:n) &\leftarrow& G_j R(1:k,k+1:n), \\
\hat{\mathbf{y}}(1:k) &\leftarrow& G_j \hat{\mathbf{y}}(1:k).
\end{eqnarray*}

\item[Step 4.] Let $\hat{z}_k = z_j^{(0)}$. Move back to Step 2 by replacing the index $k$ with $k-1$, unless $k=1$.

\item[Step 5.] Output the upper triangular matrix
$R\in\mathbb{R}^{n\times n}$, the permutation matrix
$P\in\mathbb{Z}^{n\times n}$, the vector $\hat{\mathbf{y}}
\in\mathbf{R}^n$, and the permuted intervals $\mathcal{B}_i$, for
$i=1,\ldots,n$.

\end{enumerate}
\vskip 0.1 in

Here, ``$\rm{round}(c_k)_{\mathcal{B}_i}$" and ``$ \rm{round}(c_k)_{\mathcal{B}_i\backslash z^{(0)}_{i}}$" denote the
closest and second closest integers in $\mathcal{B}_i$ to
$c_k$, respectively. It should be noted that in the CH algorithm,
computing Step 2 is cumbersome. This is because after swapping
columns $i$ and $k$, it requires $k-i$ Givens rotations to
eliminate the last $k-i$ elements in the $i$-th column and
further $k-i-1$ Givens rotations to eliminate the subdiagonal
entries from column $i+1$ to $k$. To simplify the way of doing
permutation and triangularization, Breen and Chang
in~\cite{Breen2012} suggested to rotate $i$-th column to the
$k$-th column and shift columns $i, i+1,\ldots, k$ to the left
one position. This implies that we only require $k-i-1$ Givens
rotations to do the triangularization.

%
\setlength{\algomargin}{1.5 ex}
\restylealgo{algoruled}
\SetKwComment{Comment}{}{}
\begin{algorithm}[t!!!!!!!!!] \linesnumbered
\caption{(Boxed-LLL reduction) \hfill $[R, Z, \hat{\mathbf{y}}] = \textsc{MCH}(H,\mathbf{y},\mathbf{\ell}, \mathbf{u})$}
\label{blllalg}
\KwIn{A matrix $H\in\mathbb{R}^{m\times n}$, a
 vector $\mathbf{y}\in\mathbb{R}^n$, a lower bound vector $\mathbf{\ell}\in\mathbb{Z}^n$, and an upper bound vector $\mathbf{u}\in\mathbb{Z}^n$.}
\KwOut{An upper triangular matrix $R\in \mathbb{R}^{n\times n}$, an unimodular matrix $Z\in\mathbb{Z}^{n\times n}$, and a vector $\hat{\mathbf{y}}\in
\mathbb{R}^{m}$  for the computation of~\eqref{eq:reduced}  and~\eqref{eq:qz} with updated lower and upper bounds defined by $\mathbf{\ell}\leftarrow Z^\top \mathbf{\ell} $ and
$\mathbf{u}\leftarrow Z^\top \mathbf{u} $, respectively.}
\Begin{
compute the QR decomposition of $H$, i.e., \\
\Comment{ $H=
\left[\begin{array}{cc} Q_1 & Q_2\end{array}\right]
\left[\begin{array}{c}R \\0\end{array}\right]$\;}
$S \leftarrow R^{-\top}$,\,
 $\hat{\mathbf{y}} \leftarrow Q_1^\top \mathbf{y}$,\,
$k \leftarrow 2$\;
$\check{R}\leftarrow R$,\, $G\leftarrow I_n$,\,
$Z \leftarrow I_n$,\,
 ${\check{\mathbf{y}}} \leftarrow \hat{\mathbf{y}}$\;
 \For{$k\leftarrow n$ \KwTo $2$}
{
$maxGap\leftarrow -1$\;
\For{$i\leftarrow 1$ \KwTo $k$}
{
 \For{$ i = k-1,\ldots,1$}
{$\alpha \leftarrow \hat{\mathbf{y}}(i:k)^\top S(i:k,i)$, \,
$\mathbf{x}(i) \leftarrow  \rm{round}(\alpha)_{\mathcal{B}_i}$,\,
$\hat{\mathbf{x}}(i) \leftarrow  \rm{round}(\alpha)_{\mathcal{B}_i \setminus x(i)}$,\,
$temp \leftarrow |\alpha - \hat{\mathbf{x}}(i)|/\|S(i:k,i)\|_2$\;
\If {$temp > maxGap$} {
\Comment{/* Define the Babai integer point $\hat{\mathbf{z}.}$}
$maxGap \leftarrow temp$,\,
$j\leftarrow i$\;
}
}
}
%
%
%
$\hat{\mathbf{y}}  \leftarrow  \hat{\mathbf{y}}
- R(:,j)x(j)$\;
\If {$j\neq k$} {
\Comment{/* Permutation and Triangularization}
$R  \leftarrow   R \widehat{P}_{k,j} $,\,
$S  \leftarrow  S \widehat{P}_{k,j}$,\,
$Z(1:k,1:k)  \leftarrow  Z(1:k,1:k) \widehat{P}_{k,j}$\;

$R  \leftarrow   \widehat{G}_{k,j} R  $,\,
$S  \leftarrow  \widehat{G}_{k,j} S $,\,
$\hat{\mathbf{y}}  \leftarrow  \widehat{G}_{k,j} \hat{\mathbf{y}}  $,\;
\Comment{/* Update $\ell$, $u$ and $G$}
$\mathbf{\ell}(1:k)\leftarrow  \widehat{P}_{k,j}^\top \mathbf{\ell}(1:k) $,\,
$\mathbf{u}(1:k)\leftarrow  \widehat{P}_{k,j}^\top \mathbf{u} (1:k)$,\,
$G(1:k,1:k)\leftarrow \widehat{G}_{k,j} G(1:k,1:k)$
\;
}
\Comment{
/* Remove the last rows of $R$, $S$ and $\hat{\mathbf{y}}$, and the last columns of $R$ and $S$.}
$\hat{\mathbf{y}} \leftarrow \hat{\mathbf{y}}(1:k-1)$,\,
$R(:,k) \leftarrow []$,\,  $S(:,k) \leftarrow []$,\,
$R(k,:) \leftarrow []$,\,  $S(k,:) \leftarrow []$\;
}

$R \leftarrow G \check{R} Z$,\, 
$\hat{\mathbf{y}} \leftarrow G \check{\mathbf{y}}$\; }
\end{algorithm}

Later, Su and Wassell proposed a geometric approach to
efficiently compute $c_k$, provided that $H$ is
nonsingular~\cite{Su2005}. Indeed,
the matrix $H$ are only required to have full column rank and could
be non-square, since, from~\eqref{eq:bar} and~\eqref{eq:zck}, we have
\begin{subequations}\label{eq:simple}
\begin{eqnarray}
z_i^{(0)} &=&  \rm{round}(c_k)_{\mathcal{B}_i}
=  \rm{round}(e_k^\top \overline{R}^{-1} \bar{\mathbf{y}})_{\mathcal{B}_i}
 = \rm{round}(e_i^\top
 \widetilde{R}^{-1} \tilde{\mathbf{y}})_{\mathcal{B}_i},\\
\mbox{dist}_i &=&  |{\bar{r}}_{k,k}{z}_{i} - \bar{{y}}_k|
=  |{\bar{r}}_{k,k}||{z}_{i} - \dfrac{\bar{{y}}_k}{{\bar{r}}_{k,k}}|
= \dfrac{|z_i-c_k|}{\|\overline{R}^{-\top} e_k\|_2}
= \dfrac{|z_i-c_k|}{\|\widetilde{R}^{-\top} e_i\|_2}.
\end{eqnarray}
\end{subequations}
This implies that we can simplify the CH algorithm in terms of the
formulae given in~\eqref{eq:simple}. A similar, but
complicated, discussion can be found in~\cite{Breen2012}.
Additionally, the algorithm given in~\cite{Breen2012} focuses
more on how to do the column reordering without deriving the
final upper triangular matrix $R$, a required information for
solving ILSb. Here, we strengthen the algorithm and provide the
details in Algorithm~\ref{blllalg}. We must emphasize that the computed $c_k$ in~\cite{Breen2012} is wrongly defined to be
$e_i^\top
 \widetilde{R}^{-1} {\mathbf{y}}$, but the correct expression should be $e_i^\top
 \widetilde{R}^{-1} \tilde{\mathbf{y}}$ (see line 8 of Algorithm 3
in~\cite{Breen2012}).

To solve the ILSb problems, the search process has to take the
box constraint into account. That is, during the entire search
process, we have to search for an integer vector
satisfying~\eqref{eq:r} and the box constraint
in~\eqref{eq:ILSb} simultaneously. Though this constraint
complicate our search process, we can allow this complexity to
shrink the search range. This observation has been given by
Chang and Han in~\cite{Chang2008}. Here, we summarize their
result as follows. Notice that since $z_i \in \mathcal{B}_i$
for each $i$, we have
\begin{equation}\label{eq:low}
\hat{y}_k - \sum_{i = k}^n \max(r_{k,i} \ell_i, r_{k,i} u_i)\leq
\hat{y}_k - \sum_{i = k}^n r_{k,i}z_i\leq
\hat{y}_k - \sum_{i = k}^n \min(r_{k,i} \ell_i, r_{k,i} u_i),
\end{equation}
It follows that
\begin{equation*}
 (\hat{y}_k - \sum_{i = k}^n r_{k,i}z_i)^2 \geq \delta_k.
\end{equation*}
Here, we define
\begin{equation*}
\delta_k = \min\{
(\hat{y}_k - \sum_{i = k}^n \max(r_{k,i} \ell_i, r_{k,i} u_i))^2,
(\hat{y}_k - \sum_{i = k}^n \min(r_{k,i} \ell_i, r_{k,i} u_i))^2
\},
\end{equation*}
if the upper and lower bounds in~\eqref{eq:low} have the same sign; otherwise, take $\delta_k = 0$.
Let
\begin{eqnarray*}
t_n &=& 0, \quad t_k=\sum_{i={k+1}}^n r_{i,i}^2(z_i - c_i)^2,\quad k=1,\ldots,n-1,\\
\gamma_1 &=& 0, \quad \gamma_k=\sum_{i=1}^{k-1} \delta_i,\quad k=2,\ldots,n.
\end{eqnarray*}
Since $(\hat{y}_k - \sum_{j=k}^n r_{k,j} z_j)^2 =
r_{k,k}^2(z_k-c_k)^2$,~\eqref{eq:r0} implies that
\begin{equation*}
\beta > \sum_{i=1}^n r_{i,i}^2 (z_i-c_i)^2 =
\sum_{i=1}^n (\hat{y}_i - \sum_{j=i}^n r_{i,j} z_j)^2
\geq
\gamma_k + r_{k,k}^2(z_k-c_k)^2+
t_k,
\end{equation*}
that is,
\begin{equation}\label{eq:reducedB}
r_{k,k}^2(z_k-c_k)^2 < \beta -
\gamma_k  - t_k.
\end{equation}

We thus obtain an upper bound which is at least as tight as
that in~\eqref{eq:r}. Upon using this bound, we have the
following search process given in~\cite{Chang2008}.

\vskip 0.1 in
\begin{enumerate}


\item[Step 1.]
Choose $z_{n} = \rm{round}(c_n)_{\mathcal{B}_n}$ and let $k=n-1$.
\item[Step 2.] For $1 < k < n$, consider the following two cases.

\begin{itemize}
\item[Step 2a] Choose $z_{k} = \rm{round}(c_k)_{\mathcal{B}_k}$.  Once~\eqref{eq:reducedB}
is satisfied, move forward to the next step, $k-1$.

\item[Step 2b] Otherwise, move back to the previous step, $k+1$, but choose $z_{k+1}$ in Step 2a
as the next nearest integer in $\mathcal{B}_{k+1}$ to $c_{k+1}$
until~\eqref{eq:reducedB} is satisfied. If all integers in
$\mathcal{B}_{k+1}$ have been chosen, move further back to
$k+2$ and so on.

\end{itemize}


\item[Step 3.] Once $k =1$, consider the following two cases.

\begin{itemize}
\item[Step 3a.] Choose $z_{1} = \rm{round}(c_k)_{\mathcal{B}_1}$, and update the
parameter $\beta$ by defining $\beta = \|\hat{\mathbf{y}} - R
\mathbf{z}\|_2^2$.  Move back to Step 2 with $k=2$.

\item[Step 3b.] Otherwise, move back to the previous level, but choose $z_{2}$ in Step 2a
as the next nearest integer in $\mathcal{B}_{2}$ to $c_{2}$
until~\eqref{eq:reducedB} is satisfied.

\end{itemize}

\item[Step 4.] Once reach the last level, i.e., $k=n$, and~\eqref{eq:reducedB}
is not satisfied for the current $\beta$, output the latest
found integer point $\mathbf{z}$ as the optimal solution
of~\eqref{eq:reduced}.
\end{enumerate}
\vskip 0.1 in

For this search process, two issues deserve our attention. First, in Step 2b, the strategy to move back
to the previous step must be continued until the iterations
reach the last step or a step which has not been completely
searched. For example, once the iteration moves back to the
previous step, say step $k+1$, it might be the case that entries in steps $k+1$ and
$k+2$ have been completely searched. Therefore, the iteration
should move further back to $k+3$, instead of updating
$z_{k+2}$ soon after moving back. Indeed, this phenomenon
seems to be ignored in step 5) of the search
algorithm~\cite{Chang2008} and may lead to an infinity loop in
some case. Second, like the search process for the ILS, the
initial bound $\beta$ is set to be $\infty$ so that Step 1 is
accessible from the beginning.

Based on the approaches for solving ILS and ILSb problems, we
now have an BCD approach to solve the ILA with or without box
constraints. Without elaborating on both cases, we summarize a
procedure for solving the ILA in Algorithm~\ref{alg:ILA},
provided with the initial matrix $V$. The similar discussion
can be generalized to the ILA with box constraints by
replacing the reduction and search processes in terms of the
ILSb approach.
\setlength{\algomargin}{1.5 ex}
\restylealgo{algoruled}
\SetKwComment{Comment}{}{}
\begin{algorithm}[h!!!!] \linesnumbered
\caption{(ILA) \hfill $[U, V] = \textsc{ila}(A,V)$}
\label{alg:ILA}
\KwIn{A matrix $A\in\mathbb{Z}^{m\times n}$ and an initial matrix
 $V\in\mathbb{Z}^{k\times n}$ for the computation of~\eqref{constraint1}.}
\KwOut{$U\in \mathbb{Z}^{m\times k}$ and $V\in\mathbb{Z}^{k\times n}$ as a minimizer of~\eqref{eq:ila}.}
\Begin{
\Repeat{a convergence is attained}{
\Comment{/* Update $U$.}
\For{$ i = 1,\ldots,m$}
{ $[R, Z, \hat{\mathbf{y}}] = \textsc{PLLL}(V^\top,A(i,:)^\top)$\;
$[\mathbf{z}] = \textsc{search}(R,\hat{\mathbf{y}})$\;
$U(i,:) \leftarrow (Z\mathbf{z})^\top$\;
}
\Comment{/* Update $V$.}
\For{$ j = 1,\ldots,n$}
{ $[R, Z, \hat{\mathbf{y}}] = \textsc{PLLL}(U,A(:,j))$\;
$[\mathbf{z}] = \textsc{search}(R,\hat{\mathbf{y}})$\;
$V(:,j) \leftarrow Z\mathbf{z}$\;
}
}
}
\end{algorithm}

\subsection{Properties of Convergence}
Now, we have a BCD approach to solve the ILA.
%
Let $\{(U_i,V_i)\}$ be a sequence of optimal matrices
obtained from the computation of~\eqref{constraint} via Algorithm~\ref{alg:ILA}.
In this section, we would like to show that the sequence  $\{\|A-U_iV_i\|_F\}$ is non-increasing
and
\begin{equation}\label{eq:convergence}
{\lim_{i\rightarrow\infty}\|A - U_i V_i\|_F=\|A - UV\|_F}
\end{equation}
for some particular $U\in\mathbb{Z}^{m\times k}$ and $V\in\mathbb{Z}^{k\times n}$.

To prove that the sequence  $\{\|A-U_iV_i\|_F\}$ is non-increasing, we show that
 \begin{equation*}
\|A - U_i V_i\|_F \geq \|A- U_{i+1} V_i\|_F \geq \|A - U_{i+1}
V_{i+1}\|_F
\end{equation*}
for each $i$. This amounts to showing that Algorithm~\ref{alg:ILA} computes the
optimal solution of
\begin{equation*}
\min_{u\in\mathbb{Z}^{m\times k}} \|A(:,j)-
UV(:,j)\|_2
\end{equation*}
for each $j$, and the optimal solution of
\begin{equation*}
\min_{v\in\mathbb{Z}^{k\times n}} \|A(i,:) - U(i,:)v\|_2
\end{equation*}
 for each
$i$ and can be written as follows.

\begin{theorem}\label{thm:decreasing}
The two processes, the LLL reduction {(Boxed-LLL
reduction)} and the SE search strategy {(SE search
strategy with box constraints)}, discussed in Section~\ref{sec:ils}
provide a global {optimal solution to} the ILS
{(ILSb)} problems.
\end{theorem}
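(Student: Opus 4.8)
The plan is to separate the claim into two parts for each problem: that the \emph{reduction} converts the original integer least squares problem into an equivalent one whose minimizers correspond bijectively to those of the original, and that the \emph{search}, applied to the reduced problem, returns one of its global minimizers. I would carry out the ILS case in full and then point out the few changes needed for ILSb, since the two arguments run in parallel.

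For the reduction I would invoke orthogonal invariance of the $2$-norm together with the QRZ factorization~\eqref{eq:qz}. Partitioning $Q^\top\mathbf{y} = [\hat{\mathbf{y}}^\top,\tilde{\mathbf{y}}^\top]^\top$ and setting $\mathbf{z} = Z^{-1}\mathbf{x}$, one obtains $\|\mathbf{y}-H\mathbf{x}\|_2^2 = \|\hat{\mathbf{y}}-R\mathbf{z}\|_2^2 + \|\tilde{\mathbf{y}}\|_2^2$, so the two objectives differ only by the additive constant $\|\tilde{\mathbf{y}}\|_2^2$. Because $Z$ is unimodular, $\mathbf{x}\mapsto Z^{-1}\mathbf{x}$ is a bijection of $\mathbb{Z}^n$; hence~\eqref{eq:ILS} and~\eqref{eq:reduced} share the same set of minimizers under the correspondence $\mathbf{x}=Z\mathbf{z}$, which is precisely the recovery $Z\mathbf{z}$ performed in Algorithm~\ref{alg:ILA}. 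I would stress that the LLL conditions~\eqref{Rcondition} play no role here: they govern only search efficiency, not correctness, so the argument covers both the LLL and PLLL reductions. For ILSb I would add the observation that in the Boxed-LLL reduction the factor $Z$ accumulates only permutations (the triangularizing Givens rotations are absorbed into the orthogonal factor), so $\mathcal{B}$ is merely reordered into another box recorded by $\ell\leftarrow Z^\top\ell$, $u\leftarrow Z^\top u$, and both feasibility and objective value are preserved under $\mathbf{x}=Z\mathbf{z}$.

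For the search the key is identity~\eqref{eq:r0}: with the recursively defined $c_k$, the residual equals $\sum_{i=1}^n r_{i,i}^2(z_i-c_i)^2$, a sum of nonnegative terms whose $k$-th summand depends only on $z_k,\dots,z_n$. The Schnorr--Euchner process fixes coordinates from $z_n$ down to $z_1$, enumerates candidates for each $z_k$ by increasing $|z_k-c_k|$, and prunes a partial assignment the moment its partial sum violates~\eqref{eq:rk} (or~\eqref{eq:rn} at the top). Correctness then follows from two facts. First, \emph{pruning is conservative}: by nonnegativity of the remaining summands, once a partial sum reaches $\beta$ every completion has residual at least $\beta$, so no integer point of residual strictly below the current $\beta$ is discarded. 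Second, \emph{the output is optimal}: each time a full point is reached, $\beta$ is reset to its residual, shrinking the ellipsoid; as a bounded ellipsoid holds finitely many integer points the process halts, and when the top level can no longer meet~\eqref{eq:rn} the last recorded point is a global minimizer, since any strictly better point would lie in the final ellipsoid and, by conservativeness, would have been visited. For ILSb the candidates are confined to $z_k\in\mathcal{B}_k$ and the sharper bound~\eqref{eq:reducedB} replaces~\eqref{eq:rk}; its validity I would derive from~\eqref{eq:low}, which guarantees that each undetermined lower-index summand is at least $\delta_i$, making $\gamma_k$ a genuine lower bound on the future contribution so that pruning stays conservative over the box.

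The hard part will be making the enumeration argument airtight, that is, establishing \emph{completeness} and \emph{termination} of the depth-first control flow rather than just the pruning inequalities. As the authors themselves note, a careless back-tracking rule can loop forever unless one retreats past \emph{every} already-exhausted level instead of only one. I would therefore model the search as a traversal of the finite tree whose nodes are the partial assignments $(z_n,\dots,z_k)$ meeting~\eqref{eq:rk}, prove that the corrected back-tracking visits each such node at most once and eventually exhausts the tree, and then combine this finite-traversal statement with the conservativeness fact above to conclude that the minimum over the visited leaves coincides with the global minimum over $\mathbb{Z}^n$ (respectively over $\mathcal{B}$). Monotone shrinkage of $\beta$ both forces termination and prevents revisiting, closing the argument.
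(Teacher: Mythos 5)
Your proposal is correct, and its core is the same contradiction that the paper uses; but the paper's actual proof is a single sentence, and your plan is a substantial strengthening of it rather than a restatement. The paper argues only as follows: let $\hat{\mathbf{z}}$ be the output of reduction plus search; if some $\mathbf{z}\in\mathbb{Z}^n$ (resp.\ $\mathbf{z}\in\mathcal{B}$) had $\|\hat{\mathbf{y}}-R\mathbf{z}\|_2^2<\|\hat{\mathbf{y}}-R\hat{\mathbf{z}}\|_2^2$, then in particular $(\hat{y}_n-r_{n,n}z_n)^2<\|\hat{\mathbf{y}}-R\hat{\mathbf{z}}\|_2^2$, ``which contradicts the search strategy.'' That is exactly the top level ($k=n$) of your conservative-pruning argument combined with the termination condition in Step 4, and nothing more: the paper does not verify that the reduced problem~\eqref{eq:reduced} is equivalent to~\eqref{eq:ILS} (it relies on the earlier informal remark that unimodular $Z$ preserves the optimal value), it does not descend below level $n$ to show that every completion of an admissible $z_n$ satisfying the lower-level inequalities is actually visited, and it does not prove termination or non-revisiting of the depth-first enumeration --- the very issue the authors themselves flag as a potential infinite loop in the boxed search of~\cite{Chang2008}. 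Your plan supplies precisely these missing pieces: the orthogonal-invariance/unimodularity argument for the reduction (and the observation that for ILSb the accumulated $Z$ is a permutation so the box is merely reordered), and the finite-tree traversal argument establishing completeness and termination. So you may present your proof as written; if you want to match the paper's economy you could compress the search part to its contradiction at level $n$, but you would then be inheriting the same gap the paper leaves open, and your fuller treatment is the more defensible one.
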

\begin{proof}
We use the
notations in Section~\ref{sec:ils} and let $\hat{\mathbf{z}}$
be the vector obtained by the above reduction and search
processes. If there exists an integer vector
$\mathbf{z}\in\mathbb{Z}^{n}$ (respectively,
$\mathbf{z}\in\mathcal{B}$) satisfying
\begin{equation*}
 \|\hat{\mathbf{y}} - R \mathbf{z} \|_2^2 < \|\hat{\mathbf{y}} - R \mathbf{\hat{z}} \|_2^2,
\end{equation*}
then
$(\hat{y}_n -r_{n,n}z_n )^2 < \|\hat{\mathbf{y}} - R \mathbf{\hat{z}} \|_2^2$, which contradicts the search strategy.
\end{proof}

%
%
We remark that the above theorem indicates that if $A = UV$ and
the initial value $V_0$ of~\eqref{constraint1} equal to $V$,
then after one iteration, we have
$$\|A - U_1 V_0\|_F = 0.$$
{The same result holds if we initialize $U_0=U$ and
iterate~\eqref{constraint2} first.} Also,
Theorem~\ref{thm:decreasing} implies that $\{\|A - U_iV_i\|_F\}$ is
a non-increasing sequence and, hence, converges. The remaining issue
is whether the sequence $\{U_i,V_i\}$ has at least one limit point. 
%
In the optimization analysis, this property is not necessary true
unless a further constraint such as the boundedness of the
feasible region is added.
\begin{theorem}~\label{thm:convergence}
Suppose that $U_i$'s  and $V_i$'s are two alternating results
for~\eqref{constraint}. If $U_i$'s and $V_i$'s are
limited to a bounded region $\mathcal{B}_U$ and $\mathcal{B}_V$ of
subsets of $ \mathbb{Z}^{m\times k}$ and $\mathbb{Z}^{k\times n}$,
respectively, then ~\eqref{eq:convergence} holds for some
$U\in\mathcal{B}_U$ and $V\in\mathcal{B}_V$.
\end{theorem}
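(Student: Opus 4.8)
The plan is to exploit the elementary but decisive fact that a bounded subset of an integer lattice contains only finitely many points; this is precisely what distinguishes the present discrete setting from the continuous one, where boundedness yields a limit point only through a compactness (Bolzano--Weierstrass) argument. In outline, I would first secure convergence of the scalar residual sequence, then use finiteness and the pigeonhole principle to pin the limit to an attained value $\|A-UV\|_F$.

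First I would recall that convergence of the scalar sequence $\{\|A - U_i V_i\|_F\}$ is already in hand: by Theorem~\ref{thm:decreasing} and the remark following it, each half-step of Algorithm~\ref{alg:ILA} solves its subproblem globally, so $\|A - U_iV_i\|_F \geq \|A - U_{i+1}V_i\|_F \geq \|A - U_{i+1}V_{i+1}\|_F$, making $\{\|A - U_iV_i\|_F\}$ non-increasing and bounded below by $0$; hence it converges to some limit $L \geq 0$. What remains is to identify $L$ with $\|A-UV\|_F$ for an admissible pair $(U,V)$.

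Next I would observe that since $\mathcal{B}_U \subset \mathbb{Z}^{m\times k}$ and $\mathcal{B}_V \subset \mathbb{Z}^{k\times n}$ are bounded sets of integer matrices, both are finite, and therefore so is the product set $\mathcal{B}_U \times \mathcal{B}_V$. The sequence of iterates $\{(U_i,V_i)\}$ lies entirely in this finite set, so by the pigeonhole principle at least one pair, call it $(U,V)\in\mathcal{B}_U\times\mathcal{B}_V$, occurs for infinitely many indices. Extracting the corresponding subsequence $(U_{i_j},V_{i_j}) = (U,V)$, I obtain a subsequence of residuals that is constant: $\|A - U_{i_j}V_{i_j}\|_F = \|A - UV\|_F$ for every $j$.

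Finally, since the full sequence $\{\|A - U_iV_i\|_F\}$ converges to $L$, every subsequence converges to $L$ as well; applying this to the constant subsequence forces $\|A - UV\|_F = L$. Combining the two identifications gives $\lim_{i\to\infty}\|A - U_iV_i\|_F = L = \|A - UV\|_F$, which is exactly~\eqref{eq:convergence}, with $U\in\mathcal{B}_U$ and $V\in\mathcal{B}_V$ as required. The proof carries no computational difficulty; its one genuine point --- the step I would be most careful to state cleanly --- is the upgrade from boundedness to finiteness, which is what makes the argument deliver an exact attained value rather than merely a cluster point, and which is exactly why the boundedness hypothesis on $\mathcal{B}_U$ and $\mathcal{B}_V$ cannot be dropped.
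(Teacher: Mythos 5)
Your proof is correct, and it reaches the same conclusion by a mildly different (and arguably cleaner) route than the paper. The paper argues via compactness: it extracts a convergent subsequence $\{U_{i_k}\}$ from $\{U_i\}$ using compactness of $\mathcal{B}_U$, then a further subsequence $\{V_{i_{k_\ell}}\}$ using compactness of $\mathcal{B}_V$, and identifies the limit of the residuals with $\|A-UV\|_F$ along that doubly-extracted subsequence. You instead upgrade boundedness to finiteness (a bounded subset of an integer lattice is finite) and invoke the pigeonhole principle to obtain a subsequence on which $(U_i,V_i)$ is literally constant. For discrete data these two arguments essentially coincide --- a convergent sequence of integer matrices is eventually constant, so the paper's compactness is really finiteness in disguise --- but your version is more elementary and makes explicit a step the paper's displayed chain $\lim_{i}\|A-U_iV_i\|_F=\lim_{\ell}\|A-U_{i_{k_\ell}}V_{i_{k_\ell}}\|_F$ leaves tacit: that the full residual sequence converges (by monotonicity and boundedness below, via Theorem~\ref{thm:decreasing}), so that its limit may be read off from any subsequence. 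Your argument also yields the slightly stronger observation that the limiting residual value is actually attained infinitely often by the iterates, not merely approached.
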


\begin{proof}
Since the set $\mathcal{B}_U$ is compact,
$\{U_i\}$ has a convergent subsequence, say  $\{U_{i_k}\}$.
This implies that
\begin{equation*}
\lim\limits_{k\rightarrow \infty} U_{i_k} = U
\end{equation*}
for some $U\in\mathbb{Z}^{m\times k}$.
Similarly, the set $\mathcal{B}_V$ is compact. Thus, there
exists a subsequence $\{V_{i_{k_\ell}}\}$of $\{V_{i_k}\}$ such that
$
\lim\limits_{\ell\rightarrow \infty} V_{i_{k_\ell}} = V,
$
for some $V\in\mathbb{Z}^{k\times n}$, which implies
\begin{equation*}
\lim\limits_{i\rightarrow \infty} \|A - U_iV_i\|_F  = \lim\limits_{\ell\rightarrow \infty} \|A - U_{i_{k_\ell}}V_{i_{k_\ell}}\|_F = \|A-UV\|_F.
\end{equation*}
This proves the theorem.
\end{proof}

Note that Theorem~\ref{thm:convergence} also facilitates a
pleasant interpretation on the result of convergence. That is,
once the limit of~\eqref{thm:convergence} equals to zero, every
limit point of the subsequence of $\{U_i,V_i\}$ is an exact
decomposition of the original matrix $A$. At
this particular moment, say $(U_k, V_k)$, we have $\|A - U_k
V_k\|_F = 0$ with $U_k$ and $V_k$ satisfying the restricted
bounded region. Regarding this, we should emphasize that even if the sequences $U_i$'s and $V_i$'s converge to some particular $U$ and $V$,
it does not mean that we obtain a local/global minimum for the objective function~\eqref{eq:ila}. This is because what we consider are discrete data sets. It would be hard to define the local minimum as is used in continuous data sets and worthy of our further investigation.

\section{Numerical Experiments} \label{sec:test}
In this section, we carry out three experiments on integer data
sets. In the first one, we want to illustrate the capacity of our
algorithms to do the association rule mining, and in the second one,
we randomly generate integer data sets and assess the low rank
approximation in terms of the ILA approaches with different initial
values. In the third case, we compare our results with the SVD and
NMF approaches by rounding the obtained approximations.
Particularly, in our experiments, we take square roots of the
desired singular values and assign them to the corresponding left
and right singular vectors before doing the rounding approach for
the SVD.

\subsection{Association Analysis}
Association rule learning is a well-studied method for discovering
embedded relations between variables in a given data set. For
example, in Table~\ref{trandata}, we want to predict whether a
customer who buy two diapers and one egg will continue to buy a
beer. Since each shopping item is inseparable, we record customer shopping behavior in a discrete system. Conventional approaches to
analyze this data are through a Boolean expression~\cite{KG03,KGR06}
so that a rule such as ``$\{\mathrm{diaper, egg}\} \Rightarrow
\{\mathrm{beer}\}$" can be found in the sales data, but how the
quantity affects the marketing activities cannot be revealed. Thus,
we want to demonstrate how the ILA can be applied to do the
association rule learning with quantity analysis. We use the toy
data set given in Table~\ref{trandata} as an example. Definitely,
the same idea can be applied to an extended file of applications,
including Web usage mining, cheminformatics, and intrusion detection
with large data sets.

To begin with, we pick up two most frequent entries in each column
of $A$ as the initial input matrix, for instance,
\begin{equation*} V = \left[
  \begin{array}{cccccc}
   2&1&2&0&2&4\\
   0&0&1&1&0&1\\
  \end{array}
  \right]
\end{equation*}
so that the rows of the matrix can be decomposed according to
the pattern that occurs more frequently in $A$. {Upon using
Algorithm \ref{alg:ILA} without and with  box constraints
$0\leq U^{(2)}_{i,j}\leq 2, 0\leq V^{(2)}_{i,j}\leq 4$,
respectively, we can get the following two approximations:
\begin{eqnarray*}
  U^{(1)} &=& \left[\begin{array}{cc}
    1&1\\1&0\\0&3\\2&-1\\0&1\\
  \end{array}\right],\quad
V^{(1)} = \left[\begin{array}{cccccc}
    2&1&2&0&2&4\\0&0&1&1&0&1\\
  \end{array}\right]\\
U^{(2)} &=& \left[\begin{array}{cc}
    1&1\\1&0\\0&2\\2&0\\0&1\\
  \end{array}\right],\quad
V^{(2)} = \left[\begin{array}{cccccc}
    2&1&1&0&2&4\\0&0&2&1&0&1
  \end{array}\right]
 \end{eqnarray*}
with  the residual $\|A-U^{(i)}V^{(i)}\|_F^2=9$ and $1$, for
$i=1$ and $2$, respectively.
%
In this case, we can see that we not only obtain the best
approximation, but also makes the obtained results more
interpretable than those approximated by unconstrained optimization
techniques. Furthermore, from the decomposition, we can obtain more
useful information, such as ``$\{\mbox{2 bags of bread, 1 milk, 1
diaper}\} \Rightarrow \{\mbox{2 chips, 4 beers}\}$". However, like
the conventional BCD approaches, the final result of the ILA depends
highly on the initial values. Our next example is to assess the
performance of the ILA approaches under different initial values.


\subsection{Random test}
Let
\begin{eqnarray*}
  A=\left[\begin{array}{ccccc}
    16 &  9 &  7 & 12 & 13\\
    20 & 12 &  8 & 14 & 14\\
    22 & 12 & 10 & 17 & 19\\
    22 & 14 & 10 & 16 & 17\\
    28 & 17 & 13 & 21 & 23
  \end{array}\right]=\left[\begin{array}{ccc}
    2 & 2 & 1\\ 2 & 2 & 2\\ 3 & 3 & 1\\ 2 & 3 & 2\\ 3 & 4 & 2
  \end{array}\right]
  \left[\begin{array}{ccccc}
    4 & 1 & 1 & 3 & 3\\
     2 & 2 & 2 & 2 & 3\\
     4 & 3 & 1 & 2 & 1
  \end{array}\right].
\end{eqnarray*}
Upon using the IMF with constraints $1\leq U_{i,j},V_{i,j}\leq 4$
and the initial matrix
\begin{eqnarray*}
  V_0=\left[\begin{array}{ccccc}
    3 & 2 & 4 & 3 & 3\\
    2 & 1 & 3 & 3 & 4\\
    2 & 2 & 3 & 4 & 1
  \end{array}\right],
\end{eqnarray*}
we have the computed solution
\begin{eqnarray*}
  U^*=\left[\begin{array}{ccc}
    1 & 2 & 1\\3 & 1 & 1\\2 & 3 & 1\\3 & 1 & 1\\4 & 2 & 1
  \end{array}\right],
  V^*=\left[\begin{array}{ccccc}
    4 & 3 & 2 & 3 & 3\\
    4 & 1 & 2 & 3 & 3\\
    4 & 3 & 1 & 3 & 4
  \end{array}\right],
\end{eqnarray*}
and $\|A-U^*V^*\|_F^2=23$.

However, if the initial
matrix
\begin{eqnarray*}
  V_0=\left[\begin{array}{ccccc}
    2 & 3 & 2 & 4 & 1\\
    3 & 2 & 2 & 1 & 2\\
    2 & 1 & 4 & 3 & 3
  \end{array}\right],
\end{eqnarray*}
the computed solution is
\begin{eqnarray*}
  U^*=\left[\begin{array}{ccc}
    2 & 2 & 1\\1 & 4 & 1\\3 & 3 & 1\\2 & 4 & 1\\3 & 4 & 2
  \end{array}\right],
  V^*=\left[\begin{array}{ccccc}
    3 & 1 & 1 & 3 & 3\\
    3 & 2 & 1 & 2 & 2\\
    4 & 3 & 3 & 2 & 3
  \end{array}\right],
\end{eqnarray*}
and $\|A-U^*V^*\|_F^2=7$. Definitely, this is a totally different
result. In Theorem \ref{thm:convergence}, we see that the sequence
$\{(U_k, V_k)\}$ is convergent. However, we can not guarantee that
the convergent sequence provides the optimal solution
of~\eqref{eq:ila}.

In Figure~\ref{distribution}, the distribution
of the residual $\|A-U^*V^*\|_F^2$ obtained by randomly
choosing $100$ initial points is shown.
\begin{figure}[h]
\begin{center}
\epsfig{figure=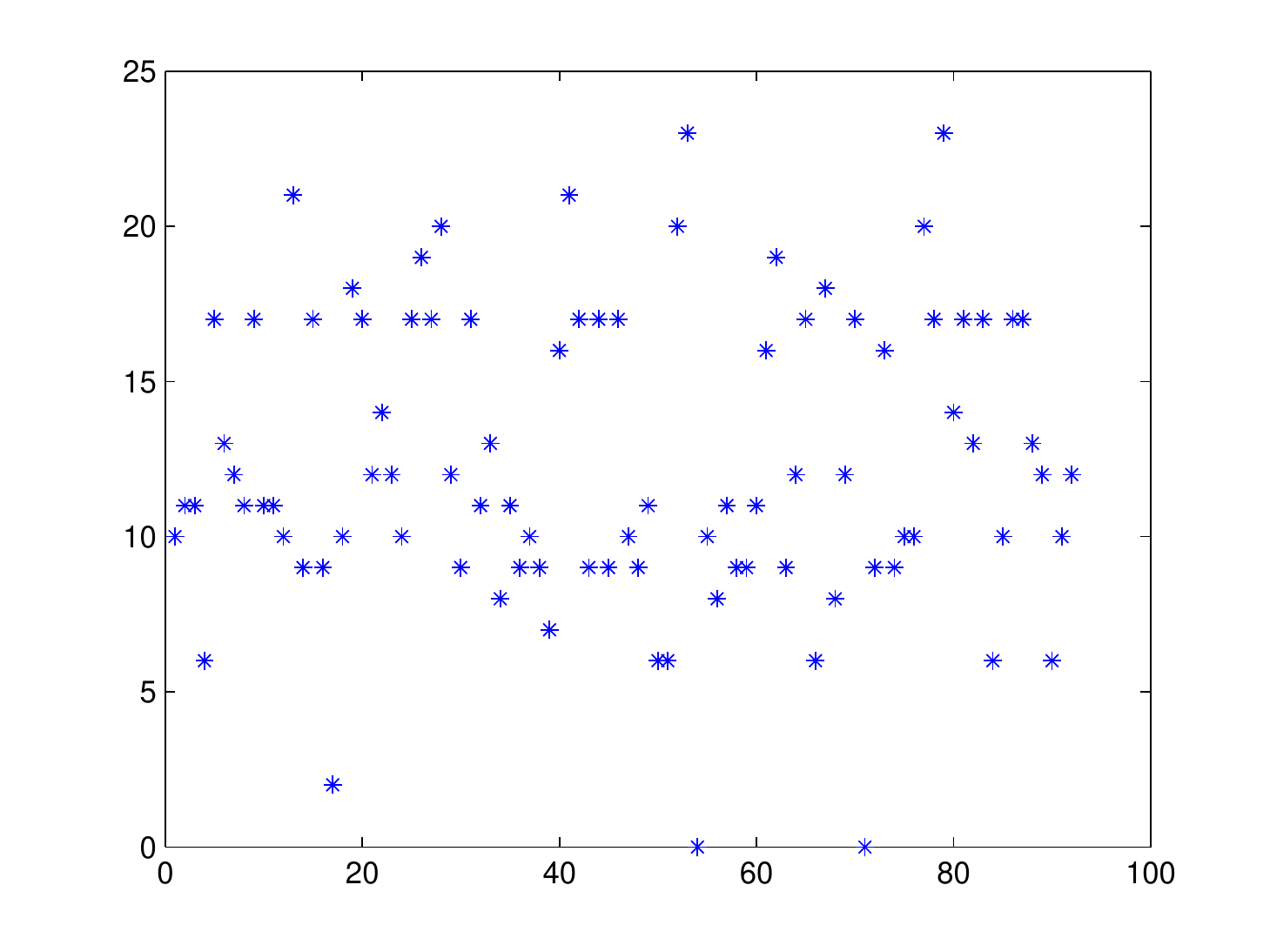, height=8cm,width=12.5cm}
\end{center}
\caption{Distribution of the residuals} \label{distribution}
\end{figure}
We can see from Figure \ref{distribution} that though almost all
residuals are located in the interval $[5,20]$, the zero residual
indicates that a good initial matrix can exactly recover the matrix
$A$. Our algorithm is designed for matrix with full
column rank, therefore, in the iterative process, if $U$ (or
$V^\top$) is not full column rank, we have to stop the iteration.
The number of failures is 7, therefore, there are only 93 points in
Figure \ref{distribution}.

In our next example, we want to show our algorithm
can find a more accurate solution, while comparing it with the
existing methods.
\subsection{Comparison of methods}
For a
given matrix
\begin{eqnarray*}
  A=UV
\end{eqnarray*}
where $U\in\mathbb{Z}^{n\times r}$ and $V\in\mathbb{Z}^{r\times n}$
are two random integer matrices, and $1\leq U_{i,j},V_{i,j}\leq
4$, we want to find a low-rank approximation
\begin{eqnarray*}
  A\approx W H,
\end{eqnarray*}
where $W\in\mathbb{Z}^{n\times r},H\in\mathbb{Z}^{r\times n}$.
Ideally, we want to obtain the solution $W=U$ and $H=V$,
however, we know this ideal result can hardly be achieved due
to the choice of the initial point.

In the following table, the values in the column ``ILSb'' are the
results obtained by our ILSb method, the values in the columns
``rSVD'' and ``rNMF'' are the results obtained by rounding the real
solutions by the MATLAB built-in commands {\it svd} and {\it nnmf}.
The interval (``Interval'') that contains all residual values and
the average residual value(``Aver.'') obtained by randomly choosing
100 initial points, the average number of iterations (``it.
$\sharp$'') by our algorithm, and the percentage of our methods
superior to the existing methods with the same initial value
(``Percent'') are also presented. Similar to the example
in Section 4.2, when $U$ (or $V^\top$) is not full column rank, we
stop the iteration. The number of failures is then recorded in the
column ``fail''.
\begin{table}[h!!!]
  \caption{Performance of different algorithms for fixed $r=n/5$}\label{table:com}
\begin{center}
  \begin{tabular}{ccccccccc}\hline
   \multirow{2}{*} n & \multicolumn{4}{c}{ILSb} & \multirow{2}{*}{rSVD} & \multicolumn{2}{c}{rNMF}
   & \multirow{2}{*}{Percent} \\
   \cline{2-5}\cline{7-8}  & it. $\sharp$ & Interval & Aver. & fail &  & Interval & Aver. &  \\\hline
20 & 6.4 & [0,723] & 490.52 & 0 & 1602 & [229067,242951] & 242537.25 & 100$\%$ \\
30 & 9.3 & [596,1421] & 1118.23 & 2 & 5482 & [1145273,1175381] & 1174450.61 & 100$\%$ \\
40 & 12.4 & [1891,3079] & 2535.03 & 14 & 17345 & [4229094,4300890] & 4299025.57 & 100$\%$\\
50 & 15.6 & [3168,4492] & 3761.25  & 18 & 32245 & [9896777,9999839] & 9994654.52 & 100$\%$\\
  \end{tabular}
  \end{center}
\end{table}

Note that the average rounding residual values by the SVD and NMF are much larger than the ones by our ILSb approach,
or even larger than the worst residual value in our
experiments. This phenomenon strongly shows that while handling
integer matrix factorization, in particular, with box
constraints, our method is more accurate than the convectional
methods.

%
\section{Conclusions}\label{sec:conclusion}
Matrix factorization has long been an important technique in
data analysis due to its capacity of extracting useful
information, providing decision-making, and drawing a
conclusion from a given data set. Conventional techniques
developed so far focus more on continuous data sets with real
or nonnegative entries and cannot be directly applied to handle
discrete data sets. Based on the ILS and ILSb techniques, the main
contribution of this work is to offer an effectual approach to
examine in detail the constitution or structure of a discrete
information. Numerical experiments seem to suggest that our ILA approach works very well in low rank
approximation to integer data sets.

Note that our approach is based on the column-by-column/row-by-row approximation. The calculation of each column/row is independent of each other. This implies that the parallel computation, as is applied in~\cite{Kannan2012}, can be utilized to speed up our calculation, while analyzing a large scale data matrix.

\section*{Acknowledgement}
This first author's research was supported in part by
the National Natural Science Foundation of China under grant
11101067 and the Fundamental Research Funds for the Central
Universities.
The second author's research was supported in part by the National Science Council of Taiwan
under grant 101-2115-M-194-007-MY3.
The third author's research was supported in part by the
 Defense Advanced Research Projects Agency (DARPA)
 XDATA program grant FA8750-
12-2-0309 and NSF grants CCF-0808863, IIS-1242304, and IIS-
1231742. Any opinions, findings and conclusions or recommendations
expressed in this material are those of the authors and do not
necessarily reflect the views of the funding agencies.

\bibliographystyle{unsrt}

\end{document}